\theoremstyle{plain}
\newtheorem{theorem}{Theorem}[section]
\newtheorem{lemma}[theorem]{Lemma}
\theoremstyle{definition}
\newtheorem{definition}[theorem]{Definition}
\newtheorem{remark}[theorem]{Remark}
\theoremstyle{remark}
\newtheorem{example}[theorem]{Example}
\numberwithin{theorem}{section}
\numberwithin{equation}{section}
\newcommand{\R}{\mathbb{R}}
\newcommand{\diam}{\mathrm{diam}}
\newcommand{\cl}{\overline}
\newcommand{\loc}{\mathrm{loc}}
\DeclareMathOperator{\divergence}{div}
\newcommand{\laplacian}{\Delta}
\DeclareMathOperator*{\spt}{supp}
\newcommand{\capacity}{\mathrm{cap}}
\newcommand{\wkto}{\rightharpoonup}
\newcommand{\trinorm}[1]
{{
    \left\vert\kern-0.20ex\left\vert\kern-0.20ex\left\vert
    #1 
    \right\vert\kern-0.20ex\right\vert\kern-0.20ex\right\vert
}}
\newcommand{\per}{\mathrm{per}}
\begin{document}

%%%%%%%%%%%%%%%%%%%%%%%%%%%%%%%%%%%%%%%%
% Author's name, Title, Date and other information
%%%%%%%%%%%%%%%%%%%%%%%%%%%%%%%%%%%%%%%%

\title[Uniformly Elliptic Equations on Domains with Capacity Density Conditions]{Uniformly Elliptic Equations on Domains with Capacity Density Conditions: 
	Existence of H\"{o}lder Continuous Solutions and Homogenization Results}
\author{Takanobu Hara}
\email{takanobu.hara.math@gmail.com}
\address{
Mathematical Institute, Tohoku University,
6-3, Aramaki Aza-Aoba, Aoba-ku, Sendai, Miyagi 980-8578, Japan }
\date{\today}
%\subjclass[2020]{TBD} 

%%%%%%%%%%%%%%%%%%%%%%%%%%%%%%%%%%%%%%%%
% Abstract
%%%%%%%%%%%%%%%%%%%%%%%%%%%%%%%%%%%%%%%%

\begin{abstract}
This is a progress report on study of uniformly elliptic Poisson-type equations on domains with capacity density conditions (CDC domains).
We give a brief summary of known facts of CDC domains, including Hardy's inequality,
and review a previous work of existence of globally H\"{o}lder continuous solutions.
Additionally, we apply the result to homogenization problems of $\epsilon$-periodic coefficients and present a convergence rate estimate of $L^{\infty}$ norms.
\end{abstract}

%%%%%%%%%%%%%%%%%%%%%%%%%%%%%%%%%%%%%%%%
% Title & Table
%%%%%%%%%%%%%%%%%%%%%%%%%%%%%%%%%%%%%%%%

\maketitle
% \setcounter{tocdepth}{1}
%\tableofcontents

%%%%%%%%%%%%%%%%%%%%%%%%%%%%%%%%%%%%%%%%
% Body
%%%%%%%%%%%%%%%%%%%%%%%%%%%%%%%%%%%%%%%%

%%%%%%%%%%%%%%%%%%%%%%%%%%%%%%%%%%%%%%%%
\section{Introduction}\label{sec:introduction}
%%%%%%%%%%%%%%%%%%%%%%%%%%%%%%%%%%%%%%%%

This paper is a progress report on study of partial differential equations of the form
\begin{align}
- \divergence(A(x) \nabla u) = \mu & \quad \text{in} \ \Omega, \label{eqn:DE}
\\
u = 0 & \quad \text{on} \ \partial \Omega. \label{eqn:BC}
\end{align}
Here, $\Omega$ is an open set in $\R^{n}$ ($n \ge 2$),
$A$ is a uniformly elliptic matrix-valued function on $\Omega$, and
$\mu$ is a linear continuous functional on $C_{c}(\Omega)$.
Below, we denote by $\mathcal{M}(\Omega)$ 
the set of all linear continuous functionals on $C_{c}(\Omega)$
and identify it as the space of the difference of two nonnegative Radon measures. 
The precise assumption on $A$ is as follows:
there are constants $0 < \lambda \le L < \infty$ such that
\begin{equation}\label{eqn:uniformly_elliptic}
\begin{aligned}
	|A(x) z| \le L |z|, \\
	A(x) z \cdot z \ge  \lambda |z|^{2}
\end{aligned}
\quad
\begin{aligned}
\forall z \in \R^{n}, \
\text{a.e.} \ x \in \Omega.
\end{aligned}
\end{equation}
We discuss globally H\"{o}lder continuous solutions to \eqref{eqn:DE}-\eqref{eqn:BC}.
Specific assumptions on $\Omega$ and $\mu$ for this purpose will be provided later.

The content of this paper is divided into two main parts.
The first part presents an existence result for globally H\"{o}lder continuous solutions to \eqref{eqn:DE}-\eqref{eqn:BC},
building upon the author's previous work \cite{Hara_2024, hara2023global}.
The second part focuses on its application to homogenization problems.
The new results introduced here will appear in a more generalized form in a forthcoming paper, and detailed proofs are therefore omitted.

Elliptic partial differential equations of divergence form are a classical but still important topic.
Even considering only the classical Laplace equation, it has a wide range of applications, including in physics and engineering.
The importance of general elliptic equations was recognized through variational problems. 
The problem \eqref{eqn:DE}-\eqref{eqn:BC} serves as a typical model of them.

Solving \eqref{eqn:DE}-\eqref{eqn:BC} in the sense of globally continuous solutions
requires certain assumptions on $\Omega$ and $\mu$.
Even if $\mu$ vanishes near the boundary, the boundary regularity of $u$ requires an exterior condition on $\Omega$.
Then, the Green function of $\Omega$ also vanishes on $\partial \Omega \times \partial \Omega$.
Consequently, a required condition on $\mu$ should differ from those used for interior regularity estimates.
However, deriving specific conditions on $\mu$ remains challenging due to the lack of quantitative estimates.

On the other hand, assuming a modulus of continuity of solutions changes the situation significantly.
Due to an argument in \cite{MR998128},
if there is an $\alpha$-H\"{o}lder continuous superharmonic function $u$ in $\Omega$, 
then $\mu := - \laplacian u$ satisfies the Morrey-type condition
\begin{equation}\label{eqn:def_norm}
\trinorm{\mu}_{\alpha, \Omega}
:=
\sup_{ \substack{x \in \Omega \\ 0 < r < \delta(x) / 2} } r^{2 - n - \alpha} |\mu|(B(x, r))
\le
C [ u ]_{\alpha, \Omega},
\end{equation}\label{eqn:hoelder_seminorm}
where $\delta$ is the Euclidean distance from $\partial \Omega$, $C$ is a constant depending on $n$ and
\begin{equation}
[u]_{\alpha, \Omega} := \sup_{ \substack{x, y \in \Omega \\ x \neq y} } \frac{|u(x) - u(y)|}{|x - y|^{\alpha}}.
\end{equation}
Thus, we define a subspace $\mathsf{M}^{\alpha}(\Omega)$ of $\mathcal{M}(\Omega)$ by
\[
\mathsf{M}^{\alpha}(\Omega) := \left\{ \mu \in \mathcal{M}(\Omega) \colon \trinorm{\mu}_{\alpha, \Omega} < \infty \right\}.
\]
Our main theorem (Theorem \ref{thm:poisson}) shows that if $\Omega$ satisfies a capacity density condition (CDC), % (see, \eqref{eqn:CDC} below),
then for any $\mu \in \mathsf{M}^{\alpha}(\Omega)$, there exists a weak solution $u$ to \eqref{eqn:DE}-\eqref{eqn:BC} satisfying
\begin{equation}\label{eqn:hoelder_esti}
[ u ]_{\alpha_{0}, \Omega}
\le
\frac{C}{\lambda} \diam(\Omega)^{\alpha - \alpha_{0}} \trinorm{ \mu }_{\alpha, \Omega},
\end{equation}
where $C$ and $0 < \alpha_{0} \le \alpha$ are positive constants.
This condition on $\Omega$ is optimal a certain sense (Remark \ref{rem:necessity}).

The H\"{o}lder estimate \eqref{eqn:hoelder_esti} is applicable to homogenization problems.
Homogenization theory, particularly periodic homogenization,
provides an established mathematical framework that deduces simplified macroscopic structures from systems with microscopic structures
(e.g., \cite{MR503330, MR1329546, MR3838419, MR3839345}).
Specifically, we consider $\epsilon$-parametrized problems of the form
\begin{align}
- \divergence(A_{\epsilon}(x) \nabla u_{\epsilon}) & = \mu \quad \text{in} \ \Omega, \label{eqn:DEE}
\\
u_{\epsilon} & = 0 \quad \text{on} \ \partial \Omega. \label{eqn:DEEBC}
\end{align}
If $\{ A_{\epsilon} \}$  H-converges to $A_{0}$ (cf. Definition \ref{def:h-conv}), then $u_{\epsilon} \to u_{0}$ uniformly (Theorem \ref{thm:h_conv}).
Furthermore, for $\epsilon$-periodic $A_{\epsilon}$ and smooth $\mu$,
a quantitative convergence rate estimate holds (Theorem \ref{thm:convergence_rate}).
These results show that the approximation of composite materials using the homogenization method is justified in the sense of uniform convergence of solutions, and it remains fairly robust with respect to conditions on both $\Omega$ and $\mu$.

The structure of the paper is as follows.
In Section 2, we introduce a capacity density condition and provide a brief summary of domains satisfying it.
In Section 3, we present an existence theorem of solutions to \eqref{eqn:DE}-\eqref{eqn:BC} for $\mu \in \mathsf{M}^{\alpha}(\Omega)$ under the capacity density condition.
In Section 4, we apply the results of Section 3 to abstract homogenization problems.
In Section 5, we discuss periodic homogenization and convergence rate of solutions.

%\subsection*{Notation}
We use the following notation. 
Throughout below, $\Omega$ is an open set in $\R^{n}$ ($n \geq 2$) with a nonempty boundary $\partial \Omega$.
The Euclidean distance from $\partial \Omega$ is denoted by $\delta$. % ($\delta(x) := \dist(x, \partial \Omega)$).
A ball centered at $x$ with radius $r > 0$ is written as $B(x, r)$.
\begin{itemize}
\item
$\nabla u :=$ the gradient of $u$.
$\nabla^{2} u :=$ the Hessian matrix of $u$.
\item
$\divergence \bm{u} :=$ the divergence of $\bm{u}$. $\Delta u := \divergence (\nabla u) =$ the Laplacian of $u$.
\item
$C_{c}(\Omega) :=$
the set of all continuous functions with compact support in $\Omega$.
\item
$C_{c}^{\infty}(\Omega) := C_{c}(\Omega) \cap C^{\infty}(\Omega)$.
\item
$|E|$ := the Lebesgue measure of $E \subset \R^{n}$.
\item
$\int_{E} u \, dx :=$ the integral of $u$ on $E$ with respect to the Lebesgue measure.
\item
$u_{+} := \max \{ u, 0 \}$ and $u_{-} := \max\{ -u, 0 \}$.
\end{itemize}
When the Lebesgue measure must be indicate clearly, we use the letter $m$.
The set of all continuous linear functionals on $C_{c}(\Omega)$ is denoted by $\mathcal{M}(\Omega)$.
As in \cite[Chapter 3]{MR2018901}, if $\mu \in \mathcal{M}(\Omega)$, then there are nonnegative Radon measures $\mu_{+}$ and $\mu_{-}$ on $\Omega$
such that $\mu = \mu_{+} - \mu_{-}$.
We denote by $|\mu|$ the total variation of $\mu$.
The Sobolev space $H^{1}(\Omega)$ is the set of all weakly differentiable functions
$u$ on $\Omega$ such that $\| u \|_{H^{1}(\Omega)}^{2} = \int_{\Omega} |u|^{2} + |\nabla u|^{2} \, dx$ is finite.
We denote by $H_{0}^{1}(\Omega)$ the closure of $C_{c}^{\infty}$ in $H^{1}(\Omega)$.
The space $H^{-1}(\Omega)$ is the dual of $H_{0}^{1}(\Omega)$.
For a bounded set $E \subset \R^{n}$ and $0 < \alpha \le 1$,
we define the seminorm $[u]_{\alpha, E}$ of a function $u$ on $E$ by \eqref{eqn:hoelder_seminorm}.
We define the H\"{o}lder space $C^{\alpha}(E)$ and its norm by $\| u \|_{C^{\alpha}(E)} := \| u \|_{L^{\infty}(E)} + \diam(E)^{\alpha} [u]_{\alpha, E}$.
The letters $C$ and $c$ denotes various constants with and without indices.

%%%%%%%%%%%%%%%%%%%%%%%%%%%%%%%%%%%%%%%%
\section{Capacity density condition}\label{sec:CDC}
%%%%%%%%%%%%%%%%%%%%%%%%%%%%%%%%%%%%%%%%

For an open set $U \subset \R^{n}$ and a compact set $K \subset U$, 
we define the \textit{variational capacity} $\capacity(K, U)$ of the \textit{condenser} $(K, U)$ by
\begin{equation*}\label{eqn:variational_capacity}
\capacity(K, U)
:=
\inf
\left\{
\int_{\R^{n}} |\nabla u|^{2} \, dx \colon u \in C_{c}^{\infty}(U), \ u \ge 1 \, \text{on} \, K
\right\}.
\end{equation*}
We refer the reader to \cite{MR1439503, MR1801253, MR2305115} for basic facts on capacities.

\begin{definition}
We say that an open set $\Omega$ is a \textit{CDC domain} if the following capacity density condition holds:
\begin{equation}\label{eqn:CDC}
\exists \gamma > 0 \quad \text{s.t.} \quad
\frac{ \capacity( \cl{B(\xi, R)} \setminus \Omega, B(\xi, 2R)) }{ \capacity( \cl{B(\xi, R)}, B(\xi, 2R)) } \ge \gamma \quad \forall R > 0, \ \forall \xi \in \partial \Omega.
\end{equation}
%A set $E \subset \R^{n}$ is said to be \textit{uniformly fat} if $E$
%if $E = \R^{n} \setminus \Omega$ is uniformly fat.
\end{definition}

It is difficult to specifically calculate the variational capacity of a given condenser.
However, we can check \eqref{eqn:CDC} by the following geometric arguments.
(i)
We say that an open set $\Omega$ satisfies an \textit{exterior corkscrew condition} 
if there exists $0 < c < 1$, such that, for every $\xi \in \Omega$ and every $R > 0$, there exists a ball $B(x, cR) \subset B(\xi, R)$
such that $B(x, cR) \subset \R^{n} \setminus \Omega$.
Clearly, an exterior corkscrew condition is sufficient for the \textit{volume density condition} 
\begin{equation}\label{eqn:VDC}
| \cl{B(\xi, R)} \setminus \Omega | \ge \frac{1}{C}  R^{n} \quad \forall R > 0, \ \forall \xi \in \partial \Omega.
\end{equation}
Using a bump function, we can check that \eqref{eqn:VDC} is sufficient for \eqref{eqn:CDC}.
(ii) The following sufficient condition is giving a refinement of \eqref{eqn:VDC}.
For $E \subset \R^{n}$ and $0 \le s \le n$, the \textit{$s$-dimensional Hausdorff contents} $\mathcal{H}^{s}_{\infty}(E)$ of $E$ is defined by
\[
\mathcal{H}^{s}_{\infty}(E)
:=
\inf \left\{
\sum_{i = 1}^{\infty} r_{i}^{s} \colon E \subset \bigcup_{i = 1}^{\infty} B(x_{i}, r_{i}), r_{i} > 0
\right\}.
\]
As in \cite[Chapter 6]{MR4306765}, if there exist $n - 2 < s \le n$ and $C > 0$ such that
\begin{equation}\label{eqn:HCDC}
\mathcal{H}^{s}_{\infty}( \cl{B(\xi, R)} \setminus \Omega ) \ge \frac{1}{C}  R^{s} \quad \forall R > 0, \ \forall \xi \in \partial \Omega,
\end{equation}
then \eqref{eqn:CDC} holds.
(iii) In the case $n = 2$, \eqref{eqn:CDC} can be verified by using a more geometric approach.
A nonempty set $E$ is said to be \textit{uniformly perfect} if there exists $0 < c < 1$ such that 
\[
E \cap (B(x, R) \setminus B(x, cR)) \neq \emptyset \quad 0 < \forall R < \diam(E), \ \forall x \in E.
\]
For $n = 2$, \eqref{eqn:CDC} holds if and only if $E = \R^{2} \setminus \Omega$ is uniformly perfect and unbounded.
The survey \cite{MR2019172} contains many examples of uniformly perfect sets and various related results. 

\begin{example}\label{ex:lip}
Any bounded Lipschitz domain satisfies an exterior corkscrew condition.
Therefore, \eqref{eqn:VDC} and \eqref{eqn:CDC} are satisfied, and it is a CDC domain.
\end{example}

\begin{example}\label{ex:koch}
Even for fractals, there are cases where \eqref{eqn:CDC} can be easily checked by self-similarity.
For example, the inside of the Koch snowflake is a CDC domain.
This domain satisfies an exterior corkscrew condition.
\end{example}

\begin{example}\label{ex:slit}
Assume that $D \subset \R^{n - 1}$ satisfies \eqref{eqn:VDC}.
Then, $\Omega = B(0, 1) \setminus \{ (x', 0) \colon x' \in \R^{n - 1} \setminus D \} \subset \R^{n}$ satisfies \eqref{eqn:CDC},
because \eqref{eqn:HCDC} is satisfied with $s = n - 1 > n - 2$.
\end{example}

\begin{example}
If $\Omega$ is a bounded simply connected open set in $\R^{2}$, then $\Omega$ is a CDC domain.
In this case, $E = \R^{2} \setminus \Omega$ is unbounded and uniformly perfect.
\end{example}

Let us briefly discuss domains that are not CDC domains.
A domain with removable singularities is a typical example of a domain for which \eqref{eqn:CDC} does not hold.
A concrete instance is Zaremba's punctured ball $\{ x \in \R^{2} \colon 0 < |x| < 1 \}$.
In this simple case, $\capacity(\{ 0 \}, B(0, r)) = 0$ for all $r > 0$.
Since various results are derived from \eqref{eqn:CDC}, it follows that any domain that does not satisfy one of these results is not a CDC domain.
For example, a domain with a Dirichlet irregular point is not a CDC domain.
See, e.g., \cite[Chapter 6]{MR1801253} for specific examples.

The scale-invariant density condition \eqref{eqn:CDC} in terms of capacity is important in study of the Dirichlet problem
\begin{align*}
- \laplacian u_{g} = 0 & \quad \text{in} \ \Omega, %\label{eqn:BVPDE}
\\
u_{g} = g & \quad \text{on} \ \partial \Omega. %\label{eqn:BVPBC}
\end{align*}
One of typical application is boundary H\"{o}lder estimate of solutions.
If $\Omega$ is a bounded CDC domain, we can prove that the operator
\begin{equation}\label{eqn:harmonic_extention}
C^{\alpha}(\partial \Omega) \ni g \mapsto u_{g} \in C^{\alpha}(\cl{\Omega})
\end{equation}
is bounded, where $0 < \alpha < 1$ is a positive constant depending on $n$ and $\gamma$. %by iteratively using \eqref{eqn:CDC}.
The detailed proof can be found in \cite{MR163053} and \cite[Chapter 6]{MR2305115}.
Aikawa \cite[Theorem 3]{MR1924196} showed a converse statement under a qualitative assumption;
if $\Omega$ is a Dirichlet regular domain, then the boundedness of \eqref{eqn:harmonic_extention} implies \eqref{eqn:CDC}.

A significant property of CDC domains is Hardy's inequality;
if \eqref{eqn:CDC} holds, then there exists a positive constant $c_{H}$ depending only on $n$ and $\gamma$ such that
\begin{equation}\label{eqn:hardy}
c_{H} \int_{\Omega} \left( \frac{\varphi}{\delta} \right)^{2} \, dx \le \int_{\Omega} |\nabla \varphi|^{2} \, dx \quad \forall \varphi \in C_{c}^{\infty}(\Omega).
\end{equation}
If $\Omega$ is a bounded Lipschitz domain, this inequality is obtained as
a patchwork of one-dimensional Hardy inequalities (e.g., \cite{MR163054}).
However, such a strategy does not work for a CDC domain because it cannot be expressed as the union of graphs (see, Examples \ref{ex:koch} and \ref{ex:slit}).
Three different proofs were proposed by Ancona \cite{MR856511}, Lewis \cite{MR946438} and Wannebo \cite{MR1010807}.
Lewis established $L^{p}$-Hardy inequalities for $1 \le p < \infty$, and  Wannebo proved weighted $L^{p}$-Hardy inequalities.
Later, Mikkonen \cite{MR1386213} proved more general weighted $L^{p}$-Hardy inequalities using Lewis's strategy and a new technique.
Research in this area has since grown significantly, and the monograph \cite{MR4306765} is a good reference for the current state of the field.
If $n = 2$, \eqref{eqn:hardy} is also equivalent to \eqref{eqn:CDC} (see, \cite[Theorem 2]{MR856511} and \cite[Theorem 7.20]{MR4306765}).
This equivalence is not true for $n \ge 3$.

Ancona's proof of \eqref{eqn:hardy} differs from the others.
Through a variational argument, the validity of \eqref{eqn:hardy} is equivalent to the existence of a positive function $U$ satisfying
\begin{equation}\label{eqn:sb}
\laplacian U + c_{H} \frac{U}{\delta^{2}} \le 0 \quad \text{in} \ \Omega.
\end{equation}
In \cite[Theorem 1]{MR856511}, it was proved that if \eqref{eqn:CDC} holds, then there exists $U$ on $\Omega$ satisfying \eqref{eqn:sb} and
\begin{equation}\label{eqn:sb2}
\frac{1}{C} \delta(x)^{\alpha} \le U(x) \le C \delta(x)^{\alpha} \quad \forall x \in \Omega,
\end{equation}
where $C > 0$ and $0 < \alpha \le 1$ are constants.
A function $U$ satisfying \eqref{eqn:sb}-\eqref{eqn:sb2} is called a \textit{strong barrier}.
In any dimension, the existence of a strong barrier $U$ on $\Omega$ implies $\eqref{eqn:CDC}$ (\cite[Theorem 2']{MR856511}).
A strong barrier has applications to the existence problem of solutions to Poisson's equation, which will be discussed in the next section, and is interesting in itself.
For the proof of weighted $L^{p}$-Hardy inequality based on this strategy, see \cite{Hara_2024}.

\section{Uniformly elliptic equations on CDC domains}

Let $A(x) \in L^{\infty}(\Omega)^{n \times n}$, and let $\mu \in \mathcal{M}(\Omega)$.
We understand \eqref{eqn:DE} in the sense of distributions;
a function $u \in H^{1}_{\loc}(\Omega)$ is a weak solution to \eqref{eqn:DE} if
\[
\int_{\Omega} A(x) \nabla u \cdot \nabla \varphi \, dx = \int_{\Omega} \varphi \, d \mu
\]
for all $\varphi \in C_{c}^{\infty}(\Omega)$.

Considering \eqref{eqn:DE}-\eqref{eqn:BC} for $\mu \in \mathsf{M}^{\alpha}(\Omega)$
raises problems of both existence and regularity of solutions.
If $\mu \in \mathsf{M}^{\alpha}(\Omega)$ and $\mu_{\pm}(\Omega)$ are finite, then $\mu \in H^{-1}(\Omega)$.
Unfortunately, there is no inclusion between $\mathsf{M}^{\alpha}(\Omega)$ and $H^{-1}(\Omega)$.
It is not obvious whether there is a solution of \eqref{eqn:DE}-\eqref{eqn:BC} for $\mu \in \mathsf{M}^{\alpha}(\Omega)$,
and even if there is, it may not belong to $H_{0}^{1}(\Omega)$.
If $u \in H^{1}_{\loc}(\Omega) $ satisfies \eqref{eqn:DE},
then $u \in C^{\alpha_{0}}_{\loc}(\Omega)$ for some $\alpha_{0} > 0$ (e.g., \cite{MR0271383, MR0964029, MR998128, MR1354887, MR1461542}).
However, boundary regularity of it is not clear
because known regularity estimates assume that $u \in H^{1}(\Omega)$ and $\mu_{\pm}(\Omega)$ are finite.

These two problems can be solved if there exists a supersolution controlled by a known function. 
This is a standard way to solve boundary value problems, and it can be constructed in our setting
by combining a method in \cite{MR856511} and a boundary H\"{o}lder estimate in \cite{MR1461542}.
More precisely, we patchwork infinitely many auxiliary functions controlled by the H\"{o}lder estimate.
The result is as follows.

\begin{theorem}[{\cite[Theorem 1.2]{hara2023global}}]
Let $\Omega$ be a bounded CDC domain.
Assume that $A$ satisfis \eqref{eqn:uniformly_elliptic}.
Then, for any $0 \le \mu \in \mathsf{M}^{\alpha}(\Omega)$, there exists $U \in H^{1}_{\loc}(\Omega) \cap C(\Omega)$ satisfying
\begin{equation*}\label{eqn:barrier}
- \divergence \left( A(x) \nabla U \right) \ge \mu \quad \text{in} \ \Omega,
\end{equation*}
\begin{equation*}\label{eqn:bound_of_barrier}
\begin{split}
& 
\frac{1}{\lambda C} \diam(\Omega)^{\alpha - \alpha_{0}} \trinorm{\mu}_{\alpha, \Omega} \, \delta(x)^{ \alpha_{0} } \\
& \quad
\le
U(x)
\le
\frac{C}{\lambda} \diam(\Omega)^{\alpha - \alpha_{0}} \trinorm{\mu}_{\alpha, \Omega} \, \delta(x)^{\alpha_{0}}
\quad
\forall x \in \Omega,
\end{split}
\end{equation*}
where $C$ and $\alpha_{0}$ are positive constants depending only on $n$, $L / \lambda$, $\alpha$ and $\gamma$. 
\end{theorem}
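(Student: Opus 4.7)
The plan is to build $U$ as a superposition of local supersolutions indexed by a Whitney-type decomposition of $\Omega$, in the spirit of Ancona's construction of strong barriers \cite{MR856511}, with the boundary H\"{o}lder estimate of \cite{MR1461542} supplying the universal decay rate $\delta^{\alpha_{0}}$ on each piece. The role of the capacity density condition is twofold: it is what makes Lieberman's boundary H\"{o}lder estimate applicable at every boundary point (with exponent $\alpha_{0}\in(0,\alpha]$ depending only on $n$, $L/\lambda$ and $\gamma$), and, via Hardy's inequality \eqref{eqn:hardy}, it is what lets us patch infinitely many auxiliary functions without destroying the $H^{1}_{\loc}$-regularity of the sum.

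First I would fix a Whitney-type covering $\{B_{j}=B(x_{j},r_{j})\}$ of $\Omega$ with $r_{j}\sim\delta(x_{j})$ and with $\{2B_{j}\}$ having bounded overlap, and split the measure as $\mu_{j}:=\chi_{2B_{j}}\mu$, so that $\sum_{j}\mu_{j}\ge\mu$ and
\[
\mu_{j}(\Omega)\le C\,\trinorm{\mu}_{\alpha,\Omega}\,r_{j}^{n-2+\alpha}
\]
by definition of the Morrey norm. Next, for each $j$ I would construct a nonnegative function $u_{j}\in H^{1}_{\loc}(\Omega)\cap C(\Omega)$ satisfying $-\divergence(A\nabla u_{j})\ge\mu_{j}$ in $\Omega$ together with the pointwise bound
\[
u_{j}(x)\le \frac{C}{\lambda}\,\trinorm{\mu}_{\alpha,\Omega}\,r_{j}^{\,\alpha-\alpha_{0}}\,\min\!\big(\delta(x),r_{j}\big)^{\alpha_{0}}.
\]
Such a $u_{j}$ is produced by solving a Dirichlet problem on a localized auxiliary domain (say $\Omega\cap B(x_{j},Cr_{j})$) where $\mu_{j}$ has finite energy, extending by zero, and then upgrading the resulting $L^{\infty}$-estimate to a $\delta^{\alpha_{0}}$-decay via the boundary H\"{o}lder estimate of \cite{MR1461542}. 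This estimate is admissible because $\Omega$ is a CDC domain, and the exponent $\alpha_{0}$ is the one it produces.

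Then I would set $U:=\sum_{j}u_{j}$ and verify the four required properties in order:
\emph{(i)} convergence in $H^{1}_{\loc}(\Omega)\cap C(\Omega)$ and the distributional inequality $-\divergence(A\nabla U)\ge\sum_{j}\mu_{j}\ge\mu$, using bounded overlap and Harnack/Caccioppoli estimates on fixed compact subsets;
\emph{(ii)} the upper bound, by summing the individual decay estimates, splitting the sum into $r_{j}\le\delta(x)$ and $r_{j}>\delta(x)$ and absorbing $r_{j}$ by $\diam(\Omega)$;
\emph{(iii)} the lower bound, by selecting one index $j(x)$ with $x\in B_{j(x)}$ and $r_{j(x)}\sim\delta(x)$, and using the interior Green-function lower bound for the operator $-\divergence(A\nabla\cdot)$ on $B(x_{j(x)},r_{j(x)})$ to conclude $u_{j(x)}(x)\ge c\lambda^{-1}\trinorm{\mu}_{\alpha,\Omega}\,\delta(x)^{\alpha}$, which after absorbing $\diam(\Omega)^{\alpha-\alpha_{0}}$ into the constant gives the claimed lower bound with exponent $\alpha_{0}$.

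The main obstacle, I expect, is step \emph{(iii)}: the upper estimate degrades the natural exponent $\alpha$ to $\alpha_{0}$, so for the \emph{lower} estimate one cannot afford to sum over many pieces and must extract the whole size at $x$ from one well-chosen local supersolution. This requires calibrating the auxiliary problem so that the interior Green-function mass produced by $\mu_{j(x)}$ actually lands at the point $x$ with the sharp constant, not just somewhere in $B_{j(x)}$; here Harnack's inequality for $A$ on $2B_{j(x)}\Subset\Omega$ (which is interior and therefore classical) is what makes the match work. A secondary, but routine, technical point is to check that the monotone pointwise sum of the continuous supersolutions $u_{j}$ remains continuous and satisfies the distributional inequality; this follows from the interior De Giorgi--Nash--Moser theory combined with the standard stability of supersolutions under monotone limits.
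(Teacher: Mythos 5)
Your Whitney-type decomposition, the local supersolutions $u_j$ controlled by Lieberman's boundary H\"older estimate, and the summation argument for the upper bound are all in the right spirit and match the paper's own description of its strategy (``we patchwork infinitely many auxiliary functions controlled by the H\"older estimate''). But step \emph{(iii)}, the lower bound, contains a genuine gap that cannot be repaired within your framework.

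The problem is that you try to extract the universal lower bound $U(x)\gtrsim \lambda^{-1}\diam(\Omega)^{\alpha-\alpha_0}\trinorm{\mu}_{\alpha,\Omega}\,\delta(x)^{\alpha_0}$ from the single local piece $u_{j(x)}$. This is impossible for two independent reasons. First, the Morrey norm $\trinorm{\mu}_{\alpha,\Omega}$ is a \emph{supremum} over balls, not an infimum, so nothing forces $\mu$ to deposit any mass near a given $x$. If $\mu$ is a point mass, or is supported in a single Whitney ball far from $x$, then $\mu_{j}=\chi_{2B_j}\mu=0$ for every $j$ contributing at $x$, and every $u_j$ you build there may be taken identically zero; the Green-function lower bound has nothing to act on. Second, even when $\mu$ does put Morrey-scale mass in $2B_{j(x)}$, the pointwise bound you extract is $u_{j(x)}(x)\gtrsim\lambda^{-1}\trinorm{\mu}_{\alpha,\Omega}\,\delta(x)^{\alpha}$, and the step where you ``absorb $\diam(\Omega)^{\alpha-\alpha_0}$ into the constant'' goes the wrong way: for $\alpha>\alpha_0$ one has $\delta(x)^{\alpha}=\left(\delta(x)/\diam(\Omega)\right)^{\alpha-\alpha_0}\diam(\Omega)^{\alpha-\alpha_0}\delta(x)^{\alpha_0}$, and the factor $\left(\delta(x)/\diam(\Omega)\right)^{\alpha-\alpha_0}\to0$ as $\delta(x)\to0$, so $\delta^{\alpha}$ is \emph{smaller}, not larger, than what the theorem claims.

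The missing ingredient is precisely the part of the paper's hint you under-used: Ancona's strong barrier from \cite{MR856511}. Under the CDC one can construct a function $V$ with $-\divergence(A\nabla V)\ge 0$ and $\frac{1}{C}\delta(x)^{\alpha_0}\le V(x)\le C\,\delta(x)^{\alpha_0}$ on all of $\Omega$, with $\alpha_0$ depending only on $n$, $L/\lambda$, $\gamma$, and chosen $\le\alpha$ so that your Whitney sum converges. Having produced your $W:=\sum_j u_j$, which gives $-\divergence(A\nabla W)\ge\mu$ and the upper bound $W\le\frac{C}{\lambda}\diam(\Omega)^{\alpha-\alpha_0}\trinorm{\mu}_{\alpha,\Omega}\delta^{\alpha_0}$, you should set
\[
U:=W+\frac{1}{\lambda}\diam(\Omega)^{\alpha-\alpha_0}\trinorm{\mu}_{\alpha,\Omega}\,V.
\]
The added term is itself a nonnegative supersolution, so $-\divergence(A\nabla U)\ge\mu$ is preserved; the lower bound of the theorem now comes entirely from the $V$ term and holds for every $x$ regardless of where $\mu$ lives; and the upper bound follows from the two individual upper bounds. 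The strong barrier, not the Green potential of $\mu$, is what supplies the universal lower bound.
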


From the comparison principle, the following existence result follows.

\begin{theorem}[{\cite[Theorem 1.3]{hara2023global}}]\label{thm:poisson}
Let $\Omega$ be a bounded CDC domain.
Assume that $A$ satisfies \eqref{eqn:uniformly_elliptic}.
Then, for any $\mu \in \mathsf{M}^{\alpha}(\Omega)$,
there exists a unique weak solution
$u \in H^{1}_{\loc}(\Omega) \cap C(\cl{\Omega})$ to \eqref{eqn:DE}-\eqref{eqn:BC}.
Moreover, \eqref{eqn:hoelder_esti} holds with some positive constants $C$ and $\alpha_{0}$, 
depending only on $n$, $L / \lambda$, $\alpha$ and $\gamma$.
\end{theorem}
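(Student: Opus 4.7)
The plan is to combine the supersolution produced in the preceding theorem with a comparison argument, a smooth approximation of $\mu$, and interior H\"older estimates for divergence-form equations with Morrey-type data. The supersolution will act as a pointwise envelope for approximating solutions, which in turn gives the global continuity up to $\partial \Omega$; the H\"older seminorm is then obtained by splitting pairs of points into a boundary regime (handled by the envelope) and an interior regime (handled by local regularity).

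First I would apply the preceding theorem to $\mu_{+}$ and $\mu_{-}$ separately to obtain $U_{\pm} \in H^{1}_{\loc}(\Omega) \cap C(\Omega)$ with $-\divergence(A \nabla U_{\pm}) \ge \mu_{\pm}$ and $0 \le U_{\pm}(x) \le C \lambda^{-1} \diam(\Omega)^{\alpha - \alpha_{0}} \trinorm{\mu}_{\alpha, \Omega} \, \delta(x)^{\alpha_{0}}$; note that $\trinorm{\mu_{\pm}}_{\alpha,\Omega} \le \trinorm{\mu}_{\alpha,\Omega}$. Next I would approximate by $\mu_{k} := (\chi_{\{\delta \ge 1/k\}} \mu) * \rho_{\epsilon_{k}}$ with $\epsilon_{k} \ll 1/k$ and a standard mollifier $\rho_{\epsilon}$, so that $\mu_{k} \in C_{c}^{\infty}(\Omega) \subset H^{-1}(\Omega)$, $\mu_{k} \to \mu$ weakly, and $\trinorm{\mu_{k}}_{\alpha, \Omega} \le C \trinorm{\mu}_{\alpha, \Omega}$. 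Lax--Milgram produces $u_{k} \in H_{0}^{1}(\Omega)$ solving $-\divergence(A \nabla u_{k}) = \mu_{k}$, and the weak comparison principle applied to $u_{k}$ against $U_{+}$ and $-U_{-}$ yields the uniform pointwise envelope $|u_{k}(x)| \le C \lambda^{-1} \diam(\Omega)^{\alpha - \alpha_{0}} \trinorm{\mu}_{\alpha, \Omega} \, \delta(x)^{\alpha_{0}}$.

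Interior H\"older regularity for such equations, combined with the uniform envelope, yields uniform $C^{\alpha_{0}}$ bounds on every compact subset of $\Omega$ and uniform $H^{1}$ bounds on every compactly contained open set. Along a subsequence, $u_{k} \to u$ locally uniformly and weakly in $H^{1}_{\loc}(\Omega)$, with $u$ a weak solution of \eqref{eqn:DE}; the envelope passes to the limit, so $u$ extends continuously to $\cl{\Omega}$ with boundary value zero. To upgrade this to \eqref{eqn:hoelder_esti} I would split each pair $x, y \in \Omega$ into two regimes. If $|x - y| \ge \tfrac{1}{2} \min\{\delta(x), \delta(y)\}$, then by the triangle inequality $\delta(x), \delta(y) \le 3 |x-y|$, so the envelope alone gives $|u(x) - u(y)| \le C \lambda^{-1} \diam(\Omega)^{\alpha - \alpha_{0}} \trinorm{\mu}_{\alpha,\Omega} |x-y|^{\alpha_{0}}$. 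Otherwise the interior H\"older estimate applied on $B(x, \delta(x)/2)$, rescaled using the Morrey bound $|\mu|(B(x, r)) \le \trinorm{\mu}_{\alpha, \Omega} r^{n - 2 + \alpha}$, gives the same inequality with matching prefactor. Uniqueness then follows from the comparison principle applied to the difference of two solutions, both continuously vanishing on $\partial \Omega$.

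The main obstacle is the matching of the two H\"older exponents: the boundary exponent $\alpha_{0}$ produced by the strong-barrier construction in the preceding theorem and the interior exponent coming from De Giorgi--Nash--Moser theory need not agree, and both must be controlled by the same structural constants $n$, $L/\lambda$, $\alpha$, $\gamma$. Harmonising them (by lowering $\alpha_{0}$ if necessary) while dovetailing the scaling so that the two regimes combine cleanly with the prefactor $\diam(\Omega)^{\alpha - \alpha_{0}} \trinorm{\mu}_{\alpha, \Omega}/\lambda$ is the technical heart of the argument.
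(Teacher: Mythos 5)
Your proposal is correct and follows the same route the paper indicates: construct the barrier $U_{\pm}$ from the preceding theorem, approximate $\mu$ by compactly supported measures, solve the approximate problems in $H^{1}_{0}(\Omega)$, trap the approximants between $-U_{-}$ and $U_{+}$ by comparison, pass to the limit via interior estimates, and then derive the global H\"older seminorm by splitting pairs of points into a boundary regime (controlled by the $\delta^{\alpha_{0}}$-envelope) and an interior regime (controlled by De~Giorgi--Nash--Moser interior regularity with the Morrey bound on $\mu$). Your closing observation that one must lower $\alpha_{0}$ to reconcile the barrier exponent with the interior exponent, while keeping all constants tied to $n$, $L/\lambda$, $\alpha$, $\gamma$, is precisely the correct and necessary adjustment.
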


\begin{remark}\label{rem:necessity}
If there is a pair of positive constants $C$ and $\alpha_{0}$ satisfying \eqref{eqn:hoelder_esti},
then $\gamma$ in \eqref{eqn:CDC} is estimated from below.
The precise statement is as follows.
Assume that there are constants $C > 0$ and $0 < \alpha_{0} \le 1$, such that,
for any $\xi \in \partial \Omega$, $0 < R < \diam(\Omega)$ and $0 \le \mu \in \mathsf{M}^{1}(\Omega \cap B(\xi, R))$,
there exists a nonnegative weak supersolution $u \in H^{1}_{\loc}(\Omega \cap B(\xi, R))$ to
$- \laplacian u = \mu$ in $\Omega \cap B(\xi, R)$
satisfying
\[
\left\| \frac{u}{\delta^{\alpha_{0}}} \right\|_{L^{\infty}(\Omega \cap B(\xi, R))} \le C R^{1 - \alpha_{0}} \trinorm{ \mu }_{1, \Omega \cap B(\xi, R)}.
\]
Then, there exists a positive constant $\gamma$ depending only on $n$, $C$ and $\alpha_{0}$
such that \eqref{eqn:CDC} holds.
This follows from an argument similar to \cite[Theorem 2']{MR856511}.
\end{remark}

\begin{remark}
If $\mu$ vanishes near $\partial \Omega$, then we can take $\alpha_{0} = \alpha$ under additional assumptions on $A$ and $\Omega$.
For example, assume that $A \in C^{0, \beta}(\cl{\Omega})$ and $\Omega$ is a $C^{1, \beta}$-domain ($0 < \beta < 1$).
Then, we can prove that $u \in C^{\alpha}_{\loc}(\Omega) \cap C^{1, \beta}(\cl{\Omega} \setminus \spt \mu)$
by combining \cite[Theorem 5.12]{MR2286361} and \cite[Theorem 1.6]{MR1290667}.
Therefore, $u \in C^{\alpha}(\cl{\Omega})$.

Note that boundary behavior of harmonic functions on a polygon is depends on the angles
and that the structure condition \eqref{eqn:uniformly_elliptic} is invariant, up to a difference in constants, under bi-Lipschitz transformations.
Hence, additional assumptions must be made for both $A$ and $\Omega$.
For more specific details, see, \cite[Chapter 6]{MR1349825} for instance.
\end{remark}

Finally, we consider examples of $\mu \in \mathsf{M}^{\alpha}(\Omega)$.

\begin{example}
Assume that $\mu$ is absolutely continuous with respect to the Lebesgue measure $m$.
By the Radon-Nikod\'{y}m theorem, there exists $f \in L^{1}_{\loc}(\Omega)$ such that
\begin{equation}\label{eqn:density_function} 
\mu = f m.
\end{equation}
Assume also that there are $1 \le q \le \infty$ and $0 < \alpha \le 1$ such that
\begin{equation}\label{eqn:norm_of_f}
M := \sup_{ \substack{x \in \Omega \\ 0 < r < \delta(x) / 2} } r^{2 - \alpha - n / q} \| f \|_{L^{q}(B(x, r))} < \infty.
\end{equation}
Then, by H\"{o}lder's inequality, $\trinorm{\mu}_{\alpha, \Omega} \le M$.

Following \cite{MR856511}, let us assume that $f \delta^{t} \in L^{q}(\Omega)$ for some $n / 2 < q \le \infty$ and $0 \le t < 2 - n / q$.
Then $M$ in \eqref{eqn:norm_of_f} is finite for $\alpha = \min\{ 2 - n / q - t, 1\}$.
Indeed, for any $x \in \Omega$ and any $0 < r \le \delta(x) / 2$, we have
\[
\begin{split}
\int_{B(x, r)} |f|^{q} \, dx
& =
\int_{B(x, r)} |f|^{q} \delta^{q (2 - \alpha - n / q)} \delta^{-q (2 - \alpha - n / q)} \, dx
\\
& \le
\| f \delta^{t} \|_{L^{q}(\Omega)}^{q} \, r^{-q (2 - \alpha - n / q)}.
\end{split}
\]
\end{example}

\begin{example}
Let $\sigma$ be a Radon measure on $\Omega$ satisfying
\[
M := \sup_{ \substack{x \in \Omega \\ 0 < r < \infty} } r^{1 - n} \sigma(\Omega \cap B(x, r)) < \infty.
\]
Take a function $f$ on $\Omega$ satisfying $|f(x)| \le C \delta(x)^{\alpha - 1}$ for all $x \in \Omega$, where $C > 0$ and $0 < \alpha \le 1$.
Then, $f \sigma \in \mathsf{M}^{\alpha}(\Omega)$ and $\trinorm{ f \sigma }_{\alpha, \Omega} \le C M$.
Note $\sigma \in \mathsf{M}^{1}(\Omega)$ by definition.
This example is useful when dealing with Neumann boundary data.

A more concrete example is as follows.
Let $n \ge 3$ and $0 < \alpha < 1$.
For $0 < R < 1$, define a spherically symmetric function $u_{R}(|x|) = u_{R}(r)$ on $B(0, 1)$ by
\[
u_{R}(r)
:=
\begin{cases}
\displaystyle
\frac{(1 - R)^{\alpha}}{ R^{2 - n} - 1 } \left( r^{2 - n} - 1\right) & \text{if} \  R < r < 1,
\\
(1 - R)^{\alpha} & \text{otherwise.}
\end{cases}
\]
Then, $- \laplacian u_{R} = f(R) \mathcal{H}^{n - 1} \lfloor_{ \{ |x| = R \} }$ and $f(R) / (1 - R)^{\alpha - 1}$ is bounded,
where $\mathcal{H}^{n - 1}$ is the $(n - 1)$-dimensional Hausdorff measure.
This $f$ and $\sigma = \mathcal{H}^{n - 1} \lfloor_{ \{ |x| = R \} }$ satisfy the above conditions.
Through simple calculations, we can check that
\[
\| u_{R} \|_{C^{\alpha}(\cl{ B(0, 1) })} = 1
\]
and
\[
\| \nabla u_{R} \|_{L^{2}(B(0, 1))}^{2}
=
(n - 2) \frac{ (1 - R)^{2 \alpha} }{ R^{2 - n} - 1 }.
\]
When $0 < \alpha < 1 / 2$, the latter tends to $+ \infty$ as $R \to 1$. 
This example shows that the use of $H_{0}^{1}(\Omega)$ is not appropriate to consider H\"{o}lder continuous solutions.
\end{example}

\begin{example}
Let $u$ be a superharmonic function in $C^{\alpha}(\cl{\Omega})$,
and let $\phi \in C^{\beta}(\R)$ be concave on the range of $u$.
Then, $\mu := - \laplacian \phi(u)$ belongs to $\mathsf{M}^{\alpha \beta}(\Omega)$.
This argument is effective when addressing semilinear problems.
\end{example}

%\begin{remark}
%Consider the $1$-dimensional function $u = \min\{ x^{\alpha}, c \}$, where $0 < \alpha \le 1$ and  $c > 0$.
%This $u$ is superharmonic in $\{ x > 0 \}$ and satisfies
%\[
%- u'' = \alpha (1 - \alpha) x^{\alpha - 2} \mathbf{1}_{ \{ 0 < x < c \} } + \alpha x^{\alpha - 1} \delta_{ \{ x = c \} },
%\]
%where $\delta_{ \{ x = c \} }$ is the Dirac delta measure supported on $\{ x = c \}$.
%Modifying this example, we can check the sharpness of exponents in the above examples.
%\end{remark}

%%%%%%%%%%%%%%%%%%%%%%%%%%%%%%%%%%%%%%%%
\section{Abstract homogenization}\label{sec:h-conv}
%%%%%%%%%%%%%%%%%%%%%%%%%%%%%%%%%%%%%%%%

Let us recall the definition of H-convergence of matrix-valued functions.
Define a class of matrix-valued functions $M(\lambda, L, \Omega)$ by
\[
M(\lambda, L, \Omega)
:=
\left\{
A \in L^{\infty}(\Omega)^{n \times n} \colon \text{$A$ satisfies \eqref{eqn:uniformly_elliptic_h}}
\right\},
\]
where
\begin{equation}\label{eqn:uniformly_elliptic_h}
\begin{aligned}
	|A(x) z |^{2} \le L A(x) z \cdot z, \\
	A(x) z \cdot z \ge  \lambda |z|^{2}
\end{aligned}
\quad
\begin{aligned}
\forall z \in \R^{n}, \
\text{a.e.} \ x \in \Omega.
\end{aligned}
\end{equation}
If a symmetric matrix-valued function $A(x)$ satisfies \eqref{eqn:uniformly_elliptic}, then $A \in M(\lambda, L, \Omega)$.
Furthermore, \eqref{eqn:uniformly_elliptic} implies $A \in M(\lambda, L^{2} / \lambda, \Omega)$ even if it is not symmetric.

\begin{definition}\label{def:h-conv}
Let $\{ A_{\epsilon} \}_{\epsilon > 0} \subset M(\lambda, L, \Omega)$,
and let $A_{0} \in L^{\infty}(\Omega)^{n \times n}$.
We say that $\{ A_{\epsilon} \}$ \textit{H-converges} to $A_{0}$ ($A_{\epsilon} \xrightarrow{H} A_{0}$)
if for every $\mu \in H^{-1}(\Omega)$,
the sequence $\{ u_{\epsilon} \} \subset H_{0}^{1}(\Omega)$ of weak solutions to \eqref{eqn:DEE}-\eqref{eqn:DEEBC} satisfies
\begin{align*}
u_{\epsilon} & \wkto u_{0} \ \text{weakly in} \ H_{0}^{1}(\Omega),
\\
A_{\epsilon} \nabla u_{\epsilon} & \wkto A_{0} \nabla u_{0} \  \text{weakly in} \ L^{2}(\Omega)^{n},
\end{align*}
where $u_{0} \in H_{0}^{1}(\Omega)$ is the weak solution to \eqref{eqn:DEE}-\eqref{eqn:DEEBC} with respect to $\epsilon = 0$.
\end{definition}

For basics of H-convergence, we refer the reader to \cite{MR1493039, MR1329546, MR1859696}.
If $\{ A_{\epsilon} \}_{\epsilon > 0} \subset M(\lambda, L, \Omega)$ and $A_{\epsilon} \xrightarrow{H} A_{0}$,
then $A_{0} \in M(\lambda, L, \Omega)$.
If $A_{\epsilon}(x) \to A_{0}(x)$ a.e. $x \in \Omega$,
then, $A_{\epsilon} \xrightarrow{H} A_{0}$.
One nontrivial example is $\epsilon$-periodic coefficients, which will be discussed in the next section.
%\cite{MR1684713,MR3839345, MR3932093}

The following abstract result is a direct consequence of Theorem \ref{thm:poisson}.

\begin{theorem}\label{thm:h_conv}
Let $\Omega$ be a bounded CDC domain.
Assume that $A_{\epsilon} \xrightarrow{H} A_{0}$.
For $\mu \in \mathsf{M}^{\alpha}(\Omega)$,
let $u_{\epsilon} \in H_{\loc}^{1}(\Omega) \cap C(\cl{\Omega})$ be the weak solution to \eqref{eqn:DEE}-\eqref{eqn:DEEBC}.
Then, $u_{\epsilon} \to u_{0}$ uniformly in $\Omega$,
where $u_{0} \in H^{1}_{\loc}(\Omega) \cap C(\cl{\Omega})$ is the weak solution to \eqref{eqn:DEE}-\eqref{eqn:DEEBC} with respect to $\epsilon = 0$.
\end{theorem}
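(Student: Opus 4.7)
The plan is to combine the equi-H\"{o}lder estimate from Theorem \ref{thm:poisson}, which holds uniformly in $\epsilon$ because $M(\lambda, L, \Omega)$ packages the structure constants of \eqref{eqn:uniformly_elliptic} uniformly, with an approximation of $\mu$ by sources to which Definition \ref{def:h-conv} applies directly. First I would invoke Theorem \ref{thm:poisson} for each $A_{\epsilon}$, and for $A_{0}$ (which stays in $M(\lambda, L, \Omega)$ by stability of H-limits), to conclude that $\{ u_{\epsilon} \}_{\epsilon \ge 0}$ is bounded in $C^{\alpha_{0}}(\overline{\Omega})$ with vanishing boundary values, so $|u_{\epsilon}(x)| \le M \delta(x)^{\alpha_{0}}$ for all $x \in \Omega$ and all $\epsilon$, where $M$ depends only on $n$, $L/\lambda$, $\alpha$, $\gamma$, $\diam(\Omega)$ and $\trinorm{\mu}_{\alpha, \Omega}$.

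Next I would perform a boundary-layer truncation of the measure. For large $k$, set $\Omega_{k} := \{ x \in \Omega : \delta(x) > 1/k \}$ and define $\mu_{k}$ by $\mu_{k}(E) := \mu(E \cap \Omega_{k})$. Then $\mu_{k} \in \mathsf{M}^{\alpha}(\Omega)$ with $\trinorm{\mu_{k}}_{\alpha, \Omega} \le \trinorm{\mu}_{\alpha, \Omega}$, and the Morrey bound $|\mu_{k}|(B(x, r)) \le C r^{n - 2 + \alpha}$, together with an Adams-type trace inequality, places $\mu_{k}$ in $H^{-1}(\Omega)$. Let $u_{\epsilon}^{(k)} \in H^{1}_{\loc}(\Omega) \cap C(\overline{\Omega})$ be the solution furnished by Theorem \ref{thm:poisson} for the source $\mu_{k}$; since $\mu_{k} \in H^{-1}(\Omega)$, uniqueness forces $u_{\epsilon}^{(k)}$ to coincide with the standard energy solution in $H_{0}^{1}(\Omega)$.

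The central estimate to establish is $\| u_{\epsilon} - u_{\epsilon}^{(k)} \|_{L^{\infty}(\Omega)} \le 2 M k^{-\alpha_{0}}$ uniformly in $\epsilon$ (including $\epsilon = 0$). The difference $w := u_{\epsilon} - u_{\epsilon}^{(k)}$ solves $- \divergence(A_{\epsilon} \nabla w) = \mu - \mu_{k}$, and $\mu - \mu_{k}$ is concentrated on $\{ \delta \le 1/k \}$, so $w$ is $A_{\epsilon}$-harmonic on $\Omega_{k}$. On $\partial \Omega_{k} = \{ \delta = 1/k \}$, the step-one bound applied to $u_{\epsilon}$ and to $u_{\epsilon}^{(k)}$ (each vanishing on $\partial \Omega$) gives $|w| \le 2 M k^{-\alpha_{0}}$; the weak maximum principle propagates this to all of $\Omega_{k}$, and on $\Omega \setminus \Omega_{k}$ the same pointwise H\"{o}lder bound yields $|w| \le 2 M k^{-\alpha_{0}}$ directly.

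The conclusion is a standard triangle-inequality argument. For each fixed $k$, Definition \ref{def:h-conv} applied to $\mu_{k} \in H^{-1}(\Omega)$ gives $u_{\epsilon}^{(k)} \wkto u_{0}^{(k)}$ weakly in $H_{0}^{1}(\Omega)$ as $\epsilon \to 0$; combined with the equi-$C^{\alpha_{0}}$ bound on $\{ u_{\epsilon}^{(k)} \}$ and Arzel\`{a}-Ascoli, this upgrades to uniform convergence on $\overline{\Omega}$. Splitting $\| u_{\epsilon} - u_{0} \|_{L^{\infty}} \le \| u_{\epsilon} - u_{\epsilon}^{(k)} \|_{L^{\infty}} + \| u_{\epsilon}^{(k)} - u_{0}^{(k)} \|_{L^{\infty}} + \| u_{0}^{(k)} - u_{0} \|_{L^{\infty}}$ and taking $\limsup_{\epsilon \to 0}$ followed by $k \to \infty$ closes the argument. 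The step I expect to be the main obstacle is the verification that $\mu_{k} \in H^{-1}(\Omega)$ so that Definition \ref{def:h-conv} can be invoked; this uses the Morrey-type bound satisfied by $\mu_{k}$ together with an Adams-type inequality, or alternatively a further $C_{c}^{\infty}$ mollification of $\mu_{k}$ inside $\Omega_{k}$ followed by a second triangle-inequality step.
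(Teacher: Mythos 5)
Your proposal is correct and its overall architecture matches the paper's proof exactly: truncate the source to $\mu_{k} := \mathbf{1}_{\Omega_{k}}\mu$ with $\Omega_{k} = \{\delta > 1/k\}$, observe $\mu_{k} \in H^{-1}(\Omega)$ so that Definition~\ref{def:h-conv} applies and $u_{\epsilon}^{k} \wkto u_{0}^{k}$ in $H_{0}^{1}(\Omega)$, upgrade this to uniform convergence via the $\epsilon$-uniform $C^{\alpha_{0}}(\cl{\Omega})$ bound from Theorem~\ref{thm:poisson} and Arzel\`{a}--Ascoli, and close with the triangle inequality followed by $k \to \infty$. Your worry about $\mu_{k}\in H^{-1}(\Omega)$ is unfounded: since $\overline{\Omega_{k}}\Subset\Omega$ and the Morrey bound makes $|\mu|$ locally finite, $(\mu_{k})_{\pm}(\Omega)$ are finite, and the paper records in Section~3 that a measure in $\mathsf{M}^{\alpha}(\Omega)$ with finite mass lies in $H^{-1}(\Omega)$ (precisely the Adams-type embedding you gesture at).

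The one place you take a genuinely different path is the $\epsilon$-uniform estimate $\| u_{\epsilon}-u_{\epsilon}^{(k)}\|_{L^{\infty}(\Omega)} = O(k^{-\beta})$. The paper does this in one shot by showing
\[
\trinorm{\mu-\mu_{k}}_{\alpha/2,\Omega}\le\trinorm{\mu}_{\alpha,\Omega}\,k^{-\alpha/2}
\]
(halving the exponent so that the extra $r^{\alpha/2}$ can be traded against $\delta(x)^{\alpha/2}\lesssim k^{-\alpha/2}$ on the support of $\mu - \mu_{k}$), and then feeds $\mu-\mu_{k}\in\mathsf{M}^{\alpha/2}(\Omega)$ into estimate~\eqref{eqn:hoelder_esti} applied to the difference equation. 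You instead extract the pointwise boundary decay $|u_{\epsilon}(x)|,\,|u_{\epsilon}^{(k)}(x)|\le M\,\delta(x)^{\alpha_{0}}$ from the Hölder estimate (uniform in $\epsilon,k$ since $\trinorm{\mu_{k}}_{\alpha,\Omega}\le\trinorm{\mu}_{\alpha,\Omega}$), note that $w=u_{\epsilon}-u_{\epsilon}^{(k)}$ is $A_{\epsilon}$-harmonic on $\Omega_{k}$, and propagate the boundary bound $|w|\le 2Mk^{-\alpha_{0}}$ from $\partial\Omega_{k}$ into $\Omega_{k}$ by the weak maximum principle, while on $\Omega\setminus\Omega_{k}$ the decay gives the bound directly. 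This works: $\overline{\Omega_{k}}\Subset\Omega$, so $w\in H^{1}(\Omega_{k})\cap C(\overline{\Omega_{k}})$ and the weak maximum principle for divergence-form operators with $L^{\infty}$ coefficients applies. The paper's route is a bit shorter -- it never leaves the Morrey scale and needs no auxiliary domain or maximum principle -- while your route gives the rate $k^{-\alpha_{0}}$ (with $\alpha_{0}$ from Theorem~\ref{thm:poisson}) rather than a rate tied to $\alpha/2$; both suffice since only $k\to\infty$ is used. Either way the argument closes the same way.
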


\begin{proof}
For $k \ge 1$, we define $\Omega_{k} = \{ x \in \Omega \colon \delta(x) > 1 / k \}$. Let $\mu_{k} := \mathbf{1}_{\Omega_{k}} \mu$.
Since $(\mu_{k})_{\pm}(\Omega)$ are finite, $\mu_{k} \in H^{-1}(\Omega)$.
Take any $x \in \Omega$ and any $0 < r \le \delta(x) / 2$.
If $\delta(x) \le 2 / k$, then we have
\[
\begin{split}
|\mu - \mu_{k}|(B(x, r))
& \le
\trinorm{\mu}_{\alpha, \Omega} r^{n - 2 + \alpha}
\\
& \le
\trinorm{\mu}_{\alpha, \Omega} r^{n - 2 + \alpha / 2} k^{- \alpha / 2}.
\end{split}
\]
Since $|\mu - \mu_{k}|(B(x, r)) = 0$ for $\delta(x) > 2 / k$, we also get
\[
\trinorm{ \mu - \mu_{k} }_{\alpha / 2, \Omega}
\le
\trinorm{\mu}_{\alpha, \Omega} k^{- \alpha / 2}.
\]
For $\epsilon \ge 0$, let $u_{\epsilon}^{k} \in H_{0}^{1}(\Omega)$ be the weak solution to 
\begin{align*}
- \divergence(A_{\epsilon}(x) \nabla u_{\epsilon}^{k}) & = \mu_{k} \quad \text{in} \ \Omega,
\\
u_{\epsilon}^{k} & = 0 \quad \text{on} \ \partial \Omega.
\end{align*}
By Theorem \ref{thm:poisson}, there exists a constant $C$, independent of $\epsilon$, such that
\[
\begin{split}
\| u_{\epsilon} - u_{0} \|_{L^{\infty}(\Omega)}
& \le
\| u_{\epsilon} - u_{\epsilon}^{k} \|_{L^{\infty}(\Omega)}
+
\| u_{\epsilon}^{k} - u_{0}^{k} \|_{L^{\infty}(\Omega)}
+
\| u_{0}^{k} - u_{0} \|_{L^{\infty}(\Omega)}
\\
& \le 
\| u_{\epsilon}^{k} - u_{0}^{k} \|_{L^{\infty}(\Omega)} 
+
\frac{2C}{\lambda} \trinorm{ \mu - \mu_{k} }_{\alpha / 2, \Omega}
\\
& \le 
\| u_{\epsilon}^{k} - u_{0}^{k} \|_{L^{\infty}(\Omega)} 
+
\frac{2C}{\lambda} \trinorm{\mu}_{\alpha, \Omega} k^{-\alpha / 2}.
\end{split}
\]
By assumption, $u_{\epsilon}^{k} \wkto u_{0}^{k}$ weakly in $H_{0}^{1}(\Omega)$.
Meanwhile, Theorem \ref{thm:poisson} provides
\[
\| u_{\epsilon}^{k} \|_{C^{\alpha_{0}}(\cl{\Omega})} \le C \trinorm{\mu_{k} }_{\alpha, \Omega}
\]
for all $\epsilon > 0$.
It follows from the Ascoli-Arzel\`{a} theorem that $u_{\epsilon}^{k} \to u_{0}$ uniformly.
Consequently, we obtain
\[
\limsup_{\epsilon \to 0} \| u_{\epsilon} - u_{0} \|_{L^{\infty}(\Omega)} \le \frac{2C}{\lambda} \trinorm{\mu}_{\alpha, \Omega} k^{-\alpha / 2}.
\]
Since $k$ can be chosen arbitrarily large, the left-hand side is zero.
\end{proof}

%%%%%%%%%%%%%%%%%%%%%%%%%%%%%%%%%%%%%%%%
\section{Periodic homogenization}
%%%%%%%%%%%%%%%%%%%%%%%%%%%%%%%%%%%%%%%%

To address periodic homogenization, we introduce additional notation.
Let $Y = (0, 1)^{n}$, and let $\bm{e}_{i}$ be the $i$-th unit vector in $\R^{n}$.
For $0 \le \alpha \le \infty$, we denote by $C^{\alpha}_{\per}(\R^{n})$ the set of all $Y$-periodic $C^{\alpha}$-functions on $\R^{n}$.
Here, $Y$-periodic means that $f(x + \bm{e}_{i}) = f(x)$ for all $x \in \R^{n}$ and all $1 \le i \le n$.
We often identify the set of $Y$-periodic functions on $\R^{n}$ with the set of functions on $Y$.
The Sobolev space $H^{1}_{\per}(Y)$ is the closure of $C^{\infty}_{\per}(Y)$ with respect to the $H^{1}(Y)$ norm.
We also set $H^{1}_{\per, 0}(Y) := \{ f \in H^{1}_{\per}(Y) | \int_{Y} f \, dy = 0 \}$.

Let $A(y)$ be a $Y$-periodic matrix-valued function in $M(\lambda, L)(\R^{n})$, and let
\begin{equation}\label{eqn:periodic_matrix}
A_{\epsilon}(x) = A \left( \frac{x}{\epsilon} \right).
\end{equation}
In this case, $A_{\epsilon} \xrightarrow{H} A_{0}$, where $A_{0}$ is a constant matrix which is given by
\begin{equation}\label{eqn:homogenized_coef}
A_{0} \bm{e}_{i}
=
\int_{Y} A(y) (\bm{e}_{i} + \nabla_{y} \chi_{i}(y)) \, dy
\end{equation}
and
\begin{equation}\label{eqn:corrector}
\begin{cases}
\int_{Y}
A(y) \left( \bm{e}_{i} + \nabla_{y} \chi_{i} \right)
\cdot \nabla_{y} \varphi
\, dy = 0
\quad
\forall \varphi \in H^{1}_{\per}(Y),
\\
\chi_{i} \in H^{1}_{\per, 0}(Y).
\end{cases}
\end{equation}
For details, see for example, \cite{MR503330, MR1329546, MR3839345}.

By Theorem \ref{thm:h_conv}, we have the following.

\begin{theorem}\label{thm:periodic}
Let $\Omega$ be a bounded CDC domain.
Define $\{ A_{\epsilon} \}_{\epsilon > 0} \subset M(\lambda, L, \Omega)$ by \eqref{eqn:periodic_matrix}.
For $\mu \in \mathsf{M}^{\alpha}(\Omega)$,
let $u_{\epsilon} \in H_{\loc}^{1}(\Omega) \cap C(\cl{\Omega})$ be the weak solution to \eqref{eqn:DEE}-\eqref{eqn:DEEBC}.
Then, $u_{\epsilon} \to u_{0}$ uniformly in $\Omega$,
where $u_{0} \in H^{1}_{\loc}(\Omega) \cap C(\cl{\Omega})$ is the weak solution to \eqref{eqn:DEE}-\eqref{eqn:DEEBC} 
with respect to \eqref{eqn:homogenized_coef}-\eqref{eqn:corrector}.
\end{theorem}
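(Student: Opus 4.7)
The plan is to apply Theorem \ref{thm:h_conv} directly, so the only substantive task is to verify its hypothesis: that the periodic family $\{A_{\epsilon}\}$ defined by \eqref{eqn:periodic_matrix} H-converges to the constant matrix $A_{0}$ given by \eqref{eqn:homogenized_coef}-\eqref{eqn:corrector}. Once this is in hand, uniform convergence $u_{\epsilon} \to u_{0}$ in $\Omega$ for any $\mu \in \mathsf{M}^{\alpha}(\Omega)$ follows immediately.

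First I would invoke the classical periodic homogenization theorem. The H-convergence of a $Y$-periodic family $\{A(\cdot/\epsilon)\}$ to the constant effective matrix produced by the cell problem is one of the foundational results of the theory and is proved in each of the references already cited in Section \ref{sec:h-conv} (e.g.\ \cite{MR503330, MR1329546, MR3839345}). The standard Tartar oscillating test function argument uses the correctors $\chi_{i} \in H^{1}_{\per, 0}(Y)$ from \eqref{eqn:corrector} to build, for any cutoff $\varphi \in C_{c}^{\infty}(\Omega)$, test functions of the form $\varphi(x)(x_{i} + \epsilon \chi_{i}(x/\epsilon))$, and then passes to the limit in the weak formulation via the div-curl lemma. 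The resulting identification of the weak limit of $A_{\epsilon} \nabla u_{\epsilon}$ is exactly $A_{0} \nabla u_{0}$ with $A_{0}$ as in \eqref{eqn:homogenized_coef}. This argument requires only that $\Omega$ be a bounded open set, since everything is carried out in $H_{0}^{1}(\Omega)$ with compactly supported test functions; no regularity of $\partial\Omega$ is needed at this stage.

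With H-convergence in hand, Theorem \ref{thm:h_conv} applies without further work and delivers the conclusion. The CDC hypothesis on $\Omega$ enters only through Theorem \ref{thm:h_conv}, where it supplies the uniform H\"older estimate \eqref{eqn:hoelder_esti} and thus the equicontinuity needed to promote weak $H_{0}^{1}$ convergence of the truncated problems to uniform convergence. Consequently, I do not expect any real obstacle beyond bookkeeping; the only point that deserves a moment of care is that the cell-problem derivation of $A_{0}$ is independent of the geometry of $\Omega$, so it is legitimate to combine the classical periodic homogenization result (typically stated on Lipschitz or smooth domains) with our rougher CDC setting.
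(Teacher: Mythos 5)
Your proposal is correct and takes essentially the same route as the paper: the paper likewise notes that the $\epsilon$-periodic family $A_{\epsilon}(x) = A(x/\epsilon)$ H-converges to the constant matrix $A_{0}$ from \eqref{eqn:homogenized_coef}-\eqref{eqn:corrector} (citing the standard references) and then invokes Theorem \ref{thm:h_conv} directly. Your additional remark that the classical H-convergence proof requires no boundary regularity of $\Omega$ is a sensible point of care, though the paper does not spell it out.
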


Under a further interior regularity assumption on $\mu$, we can prove the following quantitative result.

\begin{theorem}\label{thm:convergence_rate}
Suppose the assumptions in Theorem \ref{thm:periodic} hold.
Assume further that $\mu$ is given by \eqref{eqn:density_function} and $f$ satisfies \eqref{eqn:norm_of_f} for some $n < q \le \infty$ and $0 < \alpha \le 1$.
Then, for all $0 < \epsilon \le 1$, we have
\[
\| u_{\epsilon} - u_{0} \|_{L^{\infty}(\Omega)} \le \frac{C}{\lambda} M \epsilon^{\alpha_{0} / 2},
\]
where $C$ is a constant depending only on $n$, $L / \lambda$, $\alpha$, $\gamma$, $q$ and $\diam(\Omega)$.
\end{theorem}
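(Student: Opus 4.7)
The plan is to truncate $\mu$ away from the boundary and balance the resulting truncation error (controlled by Theorem \ref{thm:poisson}) against the classical periodic homogenization error for the interior-supported data.

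Fix a parameter $\eta > 0$ to be chosen, and set $\Omega_\eta := \{x \in \Omega : \delta(x) > \eta\}$ and $\mu_\eta := \mathbf{1}_{\Omega_\eta}\mu$. If $\delta(x) > 2\eta$ and $r < \delta(x)/2$, then $|\mu - \mu_\eta|(B(x,r)) = 0$. Otherwise $r < \eta$, and H\"{o}lder's inequality together with the bound $\|f\|_{L^q(B(x,r))} \le M r^{\alpha + n/q - 2}$ from \eqref{eqn:norm_of_f} yields $|\mu - \mu_\eta|(B(x,r)) \le C M r^{n + \alpha' - 2} \eta^{\alpha - \alpha'}$ for any $0 < \alpha' \le \alpha$. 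Hence $\trinorm{\mu - \mu_\eta}_{\alpha', \Omega} \le C M \eta^{\alpha - \alpha'}$. Choosing $\alpha' := \alpha - \alpha_0/2$, which belongs to $(0, \alpha]$ since $\alpha_0 \le \alpha$, and applying Theorem \ref{thm:poisson} (with data exponent $\alpha'$) to $\mu - \mu_\eta$ together with the comparison principle yields
\[
\|u_\epsilon - u_\epsilon^\eta\|_{L^\infty(\Omega)} + \|u_0 - u_0^\eta\|_{L^\infty(\Omega)} \le \frac{CM}{\lambda} \eta^{\alpha_0/2}
\]
uniformly in $\epsilon \ge 0$, where $u_\epsilon^\eta$ denotes the weak solution of \eqref{eqn:DEE}--\eqref{eqn:DEEBC} with $\mu$ replaced by $\mu_\eta$.

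For the interior-data difference $\|u_\epsilon^\eta - u_0^\eta\|_{L^\infty(\Omega)}$, exploit that $f \mathbf{1}_{\Omega_\eta} \in L^q(\Omega)$ with $q > n$ and that $A_0$ is constant, so Calder\'{o}n--Zygmund theory gives $u_0^\eta \in W^{2,q}_{\loc}(\Omega_\eta) \cap C^{1, 1 - n/q}_{\loc}(\Omega_\eta)$ with bounds polynomial in $\eta^{-1}$. Pick a cutoff $\phi \in C_c^\infty(\Omega_{\eta/2})$ with $\phi \equiv 1$ on $\Omega_\eta$ and define
\[
v_\epsilon := u_0^\eta + \epsilon \, \phi(x) \chi_i(x/\epsilon) \partial_i u_0^\eta \in H^1_0(\Omega).
\]
A standard computation using \eqref{eqn:corrector} shows that $-\divergence(A_\epsilon \nabla v_\epsilon) - \mu_\eta$ is small in $H^{-1}(\Omega)$, and an energy estimate combined with the bound $\|\epsilon \phi \chi(\cdot/\epsilon) \nabla u_0^\eta\|_{L^\infty} \le C(\eta)\epsilon$ yields an $L^2$ rate $\|u_\epsilon^\eta - u_0^\eta\|_{L^2(\Omega)} \le C(\eta) \sqrt\epsilon$. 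This is upgraded to $L^\infty$ by splitting $\Omega$ into a boundary strip $\{\delta \le h\}$, where the uniform H\"{o}lder bound from Theorem \ref{thm:poisson} gives $|u_\epsilon^\eta - u_0^\eta| \le C M h^{\alpha_0}/\lambda$ (both solutions vanish on $\partial\Omega$), and its complement, where interior Moser-type estimates for the equation $-\divergence(A_\epsilon \nabla(u_\epsilon^\eta - u_0^\eta)) = \divergence((A_\epsilon - A_0)\nabla u_0^\eta)$ convert the $L^2$ rate into an $L^\infty$ rate. Optimizing $h$ and finally selecting $\eta$ as an appropriate power of $\epsilon$ balances the truncation error $\eta^{\alpha_0/2}$ with the interior contribution, producing $\|u_\epsilon - u_0\|_{L^\infty(\Omega)} \le \frac{C}{\lambda} M \epsilon^{\alpha_0/2}$.

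The main obstacle is the $L^\infty$ upgrade in the interior step. Since $\Omega$ is only a CDC domain, $u_0^\eta$ is globally merely H\"{o}lder continuous, and the corrector expansion works only in $\Omega_{\eta/2}$, producing constants that blow up polynomially as $\eta \to 0$. Combining the local corrector estimate with the uniform boundary H\"{o}lder control from Theorem \ref{thm:poisson}, and tuning the truncation scale $\eta$ optimally to produce the clean rate $\epsilon^{\alpha_0/2}$, is the technical heart of the proof. The assumption $q > n$ is precisely what ensures the interior $C^{1, 1 - n/q}$ regularity of $u_0^\eta$ used to bound the gradient appearing in the corrector term.
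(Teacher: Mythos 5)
Your plan shares the spirit of the paper's proof (corrector expansion in the interior, boundary control via Theorem \ref{thm:poisson}, parameter balancing), but it diverges at a key quantitative step, and the divergence makes the final rate come out wrong.

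The paper does not truncate $\mu$. It introduces one parameter $R$, works with the ansatz $w_{\epsilon} = u_{\epsilon} - u_{0} - \epsilon \chi_{i}(\cdot/\epsilon) \partial_{i} u_{0}$ directly on $\Omega_{R}$, bounds $\|\nabla u_{0}\|_{L^{\infty}(\Omega_{R})} \lesssim M R^{\alpha_{0}-1}$ and $\|\nabla^{2} u_{0}\|_{L^{q}(\Omega_{R})} \lesssim M R^{\alpha_{0}-2}$ via local $W^{2,q}$ estimates for the constant-coefficient limit problem combined with the H\"{o}lder bound of Theorem \ref{thm:poisson}, uses Lemma \ref{lem:vec_pot} (the skew-symmetric vector potential $V_{i}$ for the flux corrector, which you do not mention but which is what puts the residual in divergence form with an $L^{q}$ right-hand side and makes the De Giorgi--Nash--Moser $L^{\infty}$ estimate applicable), and obtains
\[
\| u_{\epsilon} - u_{0} \|_{L^{\infty}(\Omega_{R})} \le \frac{C}{\lambda} M ( R^{\alpha_{0}} + \epsilon R^{\alpha_{0}-1} + \epsilon R^{\alpha_{0}-2} ),
\]
which is exactly balanced at $R = \sqrt{\epsilon}$ to give $\epsilon^{\alpha_{0}/2}$; the strip $\{\delta \le R\}$ is then absorbed by the barrier bound from Theorem \ref{thm:poisson}, contributing $R^{\alpha_{0}} = \epsilon^{\alpha_{0}/2}$ again.

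The gap in your proposal is the truncation estimate. Passing through $\mathsf{M}^{\alpha'}$ with $\alpha' = \alpha - \alpha_{0}/2$ gives $\|u_{\epsilon} - u_{\epsilon}^{\eta}\|_{L^{\infty}} \lesssim M \eta^{\alpha_{0}/2}$, but this is not the sharp truncation error. The sharp bound is $\eta^{\alpha_{0}}$, obtained differently: the difference $v_{\epsilon} := u_{\epsilon} - u_{\epsilon}^{\eta}$ satisfies $|v_{\epsilon}(x)| \le C M \delta(x)^{\alpha_{0}} / \lambda$ by the barrier of \cite[Theorem 1.2]{hara2023global}, and since $\mu - \mu_{\eta}$ vanishes in $\Omega_{\eta}$, $v_{\epsilon}$ is $A_{\epsilon}$-harmonic there, so the weak maximum principle on $\Omega_{\eta}$ propagates the boundary bound $C M \eta^{\alpha_{0}} / \lambda$ into the interior. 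With your weaker bound $\eta^{\alpha_{0}/2}$ the balancing cannot close: if the interior estimate scales like the paper's $\epsilon R^{\alpha_{0}-2}$ and one takes $\eta = h = R$, balancing $\eta^{\alpha_{0}/2}$ against $\epsilon \eta^{\alpha_{0}-2}$ forces $\eta = \epsilon^{2/(4-\alpha_{0})}$ and yields $\epsilon^{\alpha_{0}/(4-\alpha_{0})}$, which is strictly worse than $\epsilon^{\alpha_{0}/2}$ whenever $\alpha_{0} < 2$. Taking $\eta$ as small as $\epsilon$ to kill the truncation error would destroy the corrector expansion, which needs the cutoff scale to exceed the oscillation scale $\epsilon$. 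So as written the plan does not produce the stated rate. Even after repairing the truncation bound to $\eta^{\alpha_{0}}$, the truncation step is an unnecessary extra layer relative to the paper's one-parameter argument, and the constants in your interior Calder\'{o}n--Zygmund and Moser steps need to be tracked as explicit powers of $\eta^{-1}$ and $h^{-1}$; "polynomial in $\eta^{-1}$" and "optimizing $h$" leave the decisive arithmetic undone.
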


The proof of Theorem \ref{thm:convergence_rate} is divided into the following four steps.
\textbf{(i)} Check the regularity of $u_{0}$.
\textbf{(ii)} Subtract an auxiliary function from $u_{\epsilon} - u_{0}$ and prove an energy estimate for it.
\textbf{(iii)} Derive an interior $L^{\infty}$ estimate by using the energy estimate and an iteration technique in the De Giorgi-Nash-Moser theory.
\textbf{(iv)} Combine the interior $L^{\infty}$ estimate with Theorem \ref{thm:poisson} to obtain the desired global estimate.

Step \textbf{(ii)} above is a standard strategy.
However, for the discussion in Step \textbf{(iii)}, some adaptation of known results is necessary.
Therefore, we repeat the proof using the following two lemmas. %on correctors and related flux correctors (\cite{MR1329546}).
Lemma \ref{lem:corrector} follows from a standard regularity result (e.g., \cite[Theorem 8.22]{MR1814364}),
and Lemma \ref{lem:vec_pot} is derived through further arguments based on Lemma \ref{lem:corrector}.

\begin{lemma}\label{lem:corrector}
Let $\chi_{i}$ be a weak solution to \eqref{eqn:corrector}.
Then, there are positive constants $C$ and $\alpha$ depending only on $n$ and $L / \lambda$
such that 
\begin{equation*}\label{eqn:regularity_of_chi}
\| \chi_{i} \|_{C^{\alpha}_{\per}(Y)} \le C.
\end{equation*}
\end{lemma}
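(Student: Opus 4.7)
The plan is to view $\chi_i$ as the $\Z^n$-periodic extension of a solution to a divergence-form equation with bounded coefficients and bounded divergence-form source on all of $\R^n$, and then apply interior De Giorgi--Nash--Moser estimates on unit-scale balls. Rewriting \eqref{eqn:corrector} in strong form gives
\[
- \divergence(A(y) \nabla \chi_i) = \divergence(A(y) \bm{e}_i) \quad \text{on} \ Y,
\]
against $H^1_{\per}(Y)$ test functions. First I would test with $\chi_i$ itself: ellipticity and the $L^\infty$ bound on $A$ give $\lambda \|\nabla \chi_i\|_{L^2(Y)}^2 \le L \|\nabla \chi_i\|_{L^2(Y)}$, hence $\|\nabla \chi_i\|_{L^2(Y)} \le L/\lambda$. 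Combined with $\int_Y \chi_i = 0$ and the Poincar\'{e} inequality on the torus, this controls $\|\chi_i\|_{L^2(Y)}$ by a constant depending only on $n$ and $L/\lambda$.

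Next I would extend $\chi_i$, $A$, and the equation by periodicity to $\R^n$; a standard periodization argument for compactly supported test functions shows that the extension satisfies the same divergence-form equation distributionally on $\R^n$. Fixing an arbitrary $y_0 \in \R^n$ and working on $B(y_0, 2)$, the right-hand side is the divergence of $A \bm{e}_i \in L^{\infty}$ with $\|A \bm{e}_i\|_{L^\infty} \le L$. Moser's local boundedness for divergence-form equations with divergence-form source (see \cite[Theorem 8.17]{MR1814364}) yields
\[
\|\chi_i\|_{L^{\infty}(B(y_0, 1))} \le C \bigl( \|\chi_i\|_{L^2(B(y_0, 2))} + L \bigr),
\]
and by periodicity $\|\chi_i\|_{L^2(B(y_0, 2))} \le C(n) \|\chi_i\|_{L^2(Y)}$, so the right-hand side is already bounded in terms of $n$ and $L/\lambda$. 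The De Giorgi--Nash--Moser H\"{o}lder estimate \cite[Theorem 8.22]{MR1814364} then gives $[\chi_i]_{C^\alpha(B(y_0, 1/2))} \le C$ with $\alpha$ and $C$ depending only on $n$ and $L/\lambda$. Since $y_0$ is arbitrary, the bound is uniform on $\R^n$ and descends to $Y$ by periodicity.

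The main obstacle, though mild, is the justification that the periodic extension solves the equation distributionally on all of $\R^n$ rather than merely in the torus sense; without this, one cannot freely invoke interior estimates near $\partial Y$. I would dispose of it by writing any $\psi \in C_c^{\infty}(\R^n)$ in terms of its periodization $\tilde{\psi}(y) = \sum_{k \in \Z^n} \psi(y + k) \in C_{\per}^{\infty}(\R^n)$ (a locally finite sum), testing \eqref{eqn:corrector} against $\tilde{\psi}$, and unfolding the sum by translation invariance of $A(y) \nabla \chi_i$ and $A(y) \bm{e}_i$. Everything else is a direct application of classical results, with uniformity of constants in $y_0$ guaranteed by the translation invariance of the ellipticity bounds.
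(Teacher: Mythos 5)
Your proof is correct and follows essentially the same route the paper intends: the paper simply cites Gilbarg--Trudinger's Theorem~8.22 as ``a standard regularity result,'' and you supply exactly the supporting steps that citation implicitly assumes (energy estimate via testing with $\chi_i$, Poincar\'e on the torus, periodic extension to $\R^n$ with the unfolding argument justifying the distributional equation there, local boundedness via Theorem~8.17, then the interior H\"older estimate of Theorem~8.22, and finally covering $\overline{Y}$ by translates of $B(y_0,1/2)$). The only thing worth flagging is that your ellipticity chain uses $|A(y)z|\le L|z|$, which does follow from the class $M(\lambda,L,\Omega)$ condition $|Az|^2\le L\,Az\cdot z$ via Cauchy--Schwarz, so the estimate $\|\nabla\chi_i\|_{L^2(Y)}\le L/\lambda$ is legitimate.
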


\begin{lemma}[{\cite[Proposition 3.11]{MR3838419}}]\label{lem:vec_pot}
Define $\bm{d}_{i} = (d_{i 1}, \cdots d_{i n}) \in L^{2}(Y)^{n}$  by
\begin{equation*}\label{eqn:def_of_d}
\bm{d}_{i}(y)
:=
A(y) \left( \bm{e}_{i} + \nabla_{y} \chi_{i}(y) \right) - \bm{c},
\quad
\bm{c} := \int_{Y} A \left( \bm{e}_{i} + \nabla_{y} \chi_{i} \right) \, dy.
\end{equation*}
Then, there exists $V_{i} = (V_{ijk}) \in H^{1}_{\per}(Y)^{n \times n}$ such that 
\begin{gather*}
V_{ijk} = - V_{ikj} \quad \text{for all $1 \le j, k \le n$}, \label{eqn:vec_pot_skew}
\\
 \sum_{k = 1}^{n} \frac{\partial V_{ijk}}{\partial x_{k}} = d_{ij}, \label{eqn:vec_pot}
\\
\| V_{i} \|_{L^{\infty}(Y)} \le C L, \label{eqn:vec_pot_bound}
\end{gather*}
where $C$ is a constant depending only on $n$ and $L / \lambda$.
\end{lemma}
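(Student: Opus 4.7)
The plan is to construct $V_i$ via a Hodge-type potential decomposition of $\bm{d}_i$ on the flat torus $\mathbb{T}^n = \R^n / \Z^n$, and then upgrade the construction to the claimed $L^{\infty}$ estimate using elliptic regularity for the corrector.

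The structural facts on $\bm{d}_i$ I would record first are: $Y$-periodicity (inherited from $A$ and $\chi_i$), zero mean $\int_Y \bm{d}_i \, dy = 0$ by the very definition of $\bm{c}$, and the distributional divergence-free condition $\sum_j \partial_{y_j} d_{ij} = 0$ in $\mathcal{D}'(\R^n)$, which is simply the corrector equation \eqref{eqn:corrector} rewritten componentwise after testing with $\varphi \in C^{\infty}_{\per}(Y)$.

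For each pair $(i,j)$, I would solve the periodic Poisson problem
\[
- \Delta N_{ij} = d_{ij} \ \text{in} \ Y, \qquad \int_Y N_{ij} \, dy = 0.
\]
The zero-mean assumption on $d_{ij}$ guarantees solvability, and Fourier analysis on $\mathbb{T}^n$ (or the Lax-Milgram theorem on $H^{1}_{\per, 0}(Y)$) yields a unique $N_{ij} \in H^{2}_{\per}(Y)$. Then set
\[
V_{ijk} := \partial_{y_j} N_{ik} - \partial_{y_k} N_{ij},
\]
which is antisymmetric in $(j,k)$ by construction. A direct computation gives
\[
\sum_k \partial_{y_k} V_{ijk} = \partial_{y_j} h_i + d_{ij}, \qquad h_i := \sum_k \partial_{y_k} N_{ik},
\]
and since $h_i$ is $Y$-periodic with zero mean while $\Delta h_i = - \sum_k \partial_{y_k} d_{ik} = 0$ by divergence-freeness, $h_i \equiv 0$, which establishes the desired divergence identity.

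The main obstacle will be the uniform bound $\|V_i\|_{L^{\infty}(Y)} \le C L$, which is not delivered by $H^1$-level theory. To handle it, I would combine Lemma \ref{lem:corrector} with Meyers' higher-integrability theorem for periodic elliptic equations, upgrading $\nabla_y \chi_i$ to $L^{p_0}(Y)$ for some $p_0 > 2$ determined by $n$ and $L/\lambda$; consequently $d_{ij} \in L^{p_0}(Y)$. Since $V_{ijk}$ itself satisfies the periodic equation $-\Delta V_{ijk} = \partial_{y_j} d_{ik} - \partial_{y_k} d_{ij}$ in divergence form, a Moser-type iteration on this equation, bootstrapping with the embedding $W^{1,q}_{\per}(Y) \hookrightarrow L^{\infty}(Y)$ once $q > n$, provides the $L^{\infty}$ bound. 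The ellipticity constants propagate cleanly through each step, so the final constant depends only on $n$ and $L / \lambda$, as required.
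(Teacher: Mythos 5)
Your construction of $V_{ijk}$ is the standard one and is carried out correctly: solving the periodic Poisson problem $-\Delta N_{ij}=d_{ij}$ on the torus, setting $V_{ijk}=\partial_j N_{ik}-\partial_k N_{ij}$, and checking the divergence identity by showing that $h_i:=\sum_k\partial_k N_{ik}$ is a zero-mean periodic harmonic function, hence identically zero, are all sound. Those parts match the standard proof to which the paper defers by citation.

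The $L^{\infty}$ bound is where your argument breaks. Meyers' theorem gives $\nabla_y\chi_i\in L^{p_0}(Y)$ with $p_0=2+\delta$, where $\delta>0$ depends on $n$ and $L/\lambda$ and can be arbitrarily small when the ellipticity ratio is large. This puts $d_{ij}\in L^{p_0}$, and then Calder\'on--Zygmund for $-\Delta V_{ijk}=\partial_j d_{ik}-\partial_k d_{ij}$ gives exactly $V_{ijk}\in W^{1,p_0}_{\per}(Y)$, no more. Sobolev embedding then lands in $L^{np_0/(n-p_0)}$, which is strictly below $L^{\infty}$ as soon as $n\ge 3$ and $p_0\le n$ --- precisely the regime that matters. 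The ``Moser-type iteration'' and ``bootstrapping'' you invoke do not help: for a linear Poisson equation with a fixed divergence-form right-hand side there is nothing to iterate, and the standard De Giorgi/Moser $L^{\infty}$ estimate for $-\Delta V=\divergence G$ requires $G\in L^{q}$ with $q>n$, which Meyers does not provide for $n\ge 3$. There is no mechanism in your argument by which the integrability of $d_{ij}$ (hence of $\nabla V_{ijk}$) improves beyond $2+\delta$, so the claimed $L^{\infty}$ bound is not reached.

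The paper indicates that the $L^{\infty}$ estimate is obtained as a \emph{further argument based on Lemma~\ref{lem:corrector}}, i.e.\ on the De Giorgi--Nash--Moser H\"{o}lder bound $\|\chi_i\|_{C^{\alpha}_{\per}(Y)}\le C$. The crucial input there is the boundedness of $\chi_i$ itself --- a genuinely scalar phenomenon, with constants depending only on $n$ and $L/\lambda$ --- not a higher integrability of $\nabla\chi_i$. Your proposal never uses $\chi_i\in L^{\infty}$; it only uses Meyers on $\nabla\chi_i$, which is too weak. To repair the proof you need to bring in the $L^{\infty}$ control of $\chi_i$ and exploit the divergence-free structure of the flux $A(\bm{e}_i+\nabla_y\chi_i)$ in a way that transfers that boundedness to $V_{ijk}$, which is exactly the content of the cited Proposition~3.11 in Shen's monograph and is not a routine consequence of Meyers plus elliptic bootstrap.
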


As a consequence of the regularity estimates,
\[
w_{\epsilon} := u_{\epsilon} - u_{0} - \epsilon \sum_{i = 1}^{n} \chi_{i}\left( \frac{\cdot}{\epsilon} \right) \frac{\partial u_{0}}{\partial x_{i}}
\]
satisfies
\begin{equation*}\label{eqn:step2}
\left\| w_{\epsilon} \right\|_{L^{\infty}(\Omega_{R})}
\le
\| w_{\epsilon} \|_{L^{\infty}(\partial \Omega_{R})}
+
C \epsilon \| \nabla^{2} u_{0} \|_{L^{q}( \Omega_{R} )},
\end{equation*}
where $\Omega_{R} = \{ x \in \Omega \colon \delta(x) > R \}$.
Meanwhile, by Theorem \ref{thm:poisson} and assumption on $f$,
\begin{equation*}\label{eqn:bound_for_d2u0}
\| \nabla^{2} u_{0} \|_{L^{q}(\Omega_{R})} \le \frac{C}{\lambda} M R^{\alpha_{0} - 2}
\end{equation*}
and
\begin{equation*}\label{eqn:bound_for_du0}
\| \nabla u_{0} \|_{L^{\infty}(\Omega_{R})} \le \frac{C}{\lambda} M R^{\alpha_{0} - 1}.
\end{equation*}
Consequently, we obtain
\[
\| u_{\epsilon} - u_{0} \|_{L^{\infty}(\Omega_{R})}
\le
\frac{C}{\lambda} M \left( R^{\alpha_{0}} + \epsilon R^{\alpha_{0} - 1} + \epsilon R^{\alpha_{0} - 2} \right).
\]
Setting $R = \sqrt{\epsilon}$ and using Theorem \ref{thm:poisson} again, we arrive at the desired estimate.

\section{Conclusion}

We considered globally H\"{o}lder continuous solutions to \eqref{eqn:DE}-\eqref{eqn:BC}.
If $\Omega$ satisfies \eqref{eqn:CDC} and $\mu$ satisfies \eqref{eqn:def_norm},
then there exists a globally H\"{o}lder continuous solution $u$ to \eqref{eqn:DE}-\eqref{eqn:BC}.
Several examples of $\Omega$ and $\mu$ are discussed.
Under these assumptions, we showed that H-convergence of $\{ A_{\epsilon} \}$ leads to uniform convergence of $\{ u_{\epsilon} \}$.
For $\epsilon$-periodic coefficients, a quantitative convergence rate estimate holds under an additional assumption on $\mu$.
In these applications, uniform global H\"{o}lder estimates for solutions played a crucial role.

\section*{acknowledgement}
This work was supported by
JSPS KAKENHI (doi:10.13039/501100001691) Grant Numbers JP17H01092 and JP23H03798 
and JST CREST (doi:10.13039/501100003382) Grant Number JPMJCR18K3.
The author appreciates the helpful comments received during presentations at Meijo University, the University of Tokyo, and Nagoya University.

%%%%%%%%%%%%%%%%%%%%%%%%%%%%%%%%%%%%%%%%
% References
%%%%%%%%%%%%%%%%%%%%%%%%%%%%%%%%%%%%%%%%

%\bibliographystyle{amsalpha} % plain, alpha, abbrv, unsrt
\bibliographystyle{abbrv}
\bibliography{reference}
% \nocite{*}

%%%%%%%%%%%%%%%%%%%%%%%%%%%%%%%%%%%%%%%%
% The end of the document
%%%%%%%%%%%%%%%%%%%%%%%%%%%%%%%%%%%%%%%%

\end{document}